\documentclass[12pt]{amsart}

\usepackage{amssymb, amscd, txfonts}
\usepackage{graphicx}
\usepackage[all]{xy}  
 
\numberwithin{equation}{section}

\sloppy

\newtheorem{theorem}{Theorem}[section]
\newtheorem{proposition}[theorem]{Proposition}
\newtheorem{lemma}[theorem]{Lemma}
\newtheorem{corollary}[theorem]{Corollary}

\theoremstyle{definition}

\theoremstyle{remark}
\newtheorem{remark}[theorem]{Remark}
\newtheorem{claim}[theorem]{Claim}
\newtheorem{step}{Step}
\newtheorem{step+}{Step}


\newcommand{\Z}{\mathbb{Z}}
\newcommand{\Q}{\mathbb{Q}}

\newcommand{\LL}{\Lambda}
\newcommand{\LQ}{\Lambda_{\mathbb{Q}}}
\newcommand{\Lv}{\Lambda^{\vee}}
\newcommand{\DL}{D_{\Lambda}}
\newcommand{\DLp}{D_{\Lambda, p}}
\newcommand{\GL}{\Gamma_{\Lambda}}
\newcommand{\SpL}{{\rm Sp}(\Lambda)}
\newcommand{\dv}{{\rm div}}

\begin{document}

\title[]{Symplectic Eichler criterion}
\author[]{Shouhei Ma}
\thanks{Supported by KAKENHI 21H00971} 
\address{Department~of~Mathematics, Science~Institute~of~Tokyo, Tokyo 152-8551, Japan}
\email{ma@math.titech.ac.jp}
\keywords{} 

\begin{abstract}
We prove an Eichler-type criterion for symplectic lattices which determines in a simple way 
when two primitive vectors are equivalent under a canonical congruence subgroup of the symplectic group. 
This is supplemented by another, related criterion which determines when a given primitive vector is splitting. 
Both criteria use the discriminant groups. 
\end{abstract}

\maketitle

\section{Introduction}\label{sec: intro}

The \textit{Eichler criterion} is a simple criterion which determines 
when two primitive vectors of a quadratic lattice containing two hyperbolic planes are equivalent 
under a canonical subgroup of the orthogonal group. 
It goes back to Eichler (\cite{Ei} Satz 10.4), and was later given the modern formulation 
by various authors (e.g., \cite{Sca} Proposition 3.7.3, \cite{FH} Lemma 4.4, \cite{GHS} Proposition 3.3). 
Nowadays this criterion serves as a useful tool in the study of orthogonal modular varieties. 
In this note we give a symplectic analogue of the Eichler criterion. 

Let ${\LL}$ be a symplectic lattice, namely 
a free ${\Z}$-module of rank $2g$ endowed with a nondegenerate alternating form 
$( \cdot , \cdot ) \colon {\LL} \times {\LL} \to {\Z}$. 
It is well-known that ${\LL}$ is isometric to  $U(d_1) \oplus \cdots \oplus U(d_{g})$ 
for some natural numbers $d_{1}, \cdots, d_{g}$ with $d_i | d_{i+1}$ for all $i$, 
where $U$ is the rank $2$ unimodular symplectic lattice expressed by the matrix 
$\begin{pmatrix} 0 & 1 \\ -1 & 0 \end{pmatrix}$ 
and $U(d)$ is the $d$-scaling of $U$. 
The sequence  $(d_1, \cdots , d_g)$ is called the \textit{type} of ${\LL}$. 

The symplectic lattice ${\LL}$ is naturally embedded in its dual lattice ${\LL}^{\vee}$. 
The quotient group 
\begin{equation*}
{\DL}={\LL}^{\vee}/{\LL}\simeq ({\Z}/d_1)^{\oplus 2}\oplus \cdots  \oplus ({\Z}/d_g)^{\oplus 2}
\end{equation*} 
is called the \textit{discriminant group} of ${\LL}$ 
and endowed with a natural finite symplectic form.  
Let ${\GL}$ be the kernel of the reduction map ${\SpL}\to {\rm Sp}({\DL})$. 
This is a canonical congruence subgroup of ${\SpL}$. 

We are interested in when two primitive vectors of ${\LL}$ are ${\GL}$-equivalent. 
Let $v$ be a primitive vector of ${\LL}$. 
We denote by ${\dv}(v)$ the natural number with $(v, {\LL})= {\dv}(v){\Z}$. 
Then let $v^{\ast}=v/{\dv}(v)\in {\LL}^{\vee}$ and $[v^{\ast}]\in {\DL}$ be its class in ${\DL}$.  
Our result is the following. 

\begin{theorem}[Theorem \ref{thm: main}]\label{thm: main intro}
Assume that $d_1=d_2=1$. 
Then the assignment $v\mapsto [v^{\ast}]$ defines a bijective map 
from the set of ${\GL}$-equivalence classes of primitive vectors of ${\LL}$ 
to the set ${\DL}$. 
\end{theorem}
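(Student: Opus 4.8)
The plan is to work with a fixed splitting $\LL = U_1 \oplus U_2 \oplus W$, where $U_1, U_2$ are the two unimodular hyperbolic planes provided by $d_1=d_2=1$ and $W = U(d_3)\oplus\cdots\oplus U(d_g)$, so that $\DL = D_W$ and $U_1=\langle e_1,f_1\rangle$, $U_2=\langle e_2,f_2\rangle$ with $(e_i,f_i)=1$. The whole argument runs on two families of explicit elements of $\GL$: for a primitive isotropic $e\in\LL$ with ${\dv}(e)=1$ and $\lambda\in\Z$, the symplectic transvection $T_{e,\lambda}(x)=x+\lambda(e,x)e$; and for such an $e$ together with any $m\in\LL$ with $(e,m)=0$, the Eichler element $E_{e,m}(x)=x+(e,x)m+(m,x)e$. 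A short computation shows both preserve the form, and since $e,m\in\LL$ they map $\Lv$ into itself acting trivially on $\DL$; hence $T_{e,\lambda},E_{e,m}\in\GL$. The useful feature is that $E_{e,m}(v)=v+(e,v)m+(m,v)e$ translates $v$ by $(e,v)m$ up to a multiple of $e$; this is the symplectic substitute for the orthogonal Eichler transvection.

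Well-definedness is immediate, since any $\gamma\in\GL$ preserves divisibility and fixes $\DL$ pointwise, so $[(\gamma v)^{\ast}]=[\gamma(v^{\ast})]=[v^{\ast}]$. I would also note that the order of $[v^{\ast}]$ in $\DL$ equals ${\dv}(v)$, so that equal images force a common divisibility $d$. For surjectivity, given $\bar\alpha\in\DL=D_W$ of order $d$, I would take any lift $\beta\in W^{\vee}$ and set $v_0=d\,e_1+d\beta$: the $e_1$–term makes ${\dv}(v_0)=d$, and no prime divides both $d$ and the content of $d\beta$ (else a smaller multiple of $\beta$ would lie in $W$, contradicting the order), so $v_0$ is primitive with $[v_0^{\ast}]=\bar\alpha$.

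The core is injectivity, which I would prove by reducing any primitive $v$ with ${\dv}(v)=d$ to a normal form depending only on $\bar\alpha=[v^{\ast}]$. Using $\mathrm{Sp}(U_1\oplus U_2)=\mathrm{Sp}_4(\Z)\subset\GL$, which is transitive on primitive vectors of $U_1\oplus U_2$, I first move the $U_1\oplus U_2$–component of $v$ onto $f_1$, so that $v\sim c\,f_1+w$ with $w\in W$ and $(e_1,v)=c$ (if that component vanishes, a single $E_{e_1,m}$ already yields $d\,e_1+w$). Let $c_W$ generate $(w,W)$, so $d=\gcd(c,c_W)$. The decisive step is a content trade: choosing $m\in W$ with $(m,w)=c_W$ and applying $E_{e_1,m}$ turns the $U_1$–component into $c_W\,e_1+c\,f_1$, of content exactly $d$, after which an element of $\mathrm{Sp}(U_1)=\mathrm{SL}_2(\Z)$ brings it to $d\,e_1$; thus $v\sim d\,e_1+w'$ with $w'\in W$. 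Finally I would reduce $w'$ modulo $dW$ while keeping the $U_1$–component equal to $d\,e_1$: applying $E_{f_1,m}$ ($m\in W$) changes $w'$ by $-dm$ and introduces an $f_1$–term with coefficient $(m,w')\in d\Z$ (because $(v,\LL)=d\Z$), which $T_{f_1,\lambda}$ then clears. As $w'/d\bmod W=\bar\alpha$ determines $w'\bmod dW$, this fixes the normal form and gives injectivity.

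The main obstacle is exactly this content-trade together with the final reduction modulo $dW$: one must guarantee that shifting divisibility out of $W$ lowers the unimodular-plane component to content precisely $d$ (not merely to a multiple of $d$), and that the stray corrections produced by the Eichler elements are always divisible by $d$, so that they can be absorbed by genuinely integral transvections. It is the presence of the unimodular hyperbolic planes, i.e. the hypothesis $d_1=d_2=1$, that both supplies these transvections inside $\GL$ and furnishes the transitivity used at the first step.
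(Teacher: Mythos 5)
Your proof is correct, and it runs on the same toolbox as the paper's: your $E_{e,m}$ and $T_{e,\lambda}$ are both instances of the transvections $T_{l,m}$ that the paper shows lie in ${\GL}$, the transitivity of ${\rm Sp}(4,{\Z})$ on primitive vectors of $U_1\oplus U_2$ is the only place $d_2=1$ enters in either argument, and the surjectivity construction $v_0=de_1+d\beta$ is literally the paper's. The injectivity is organized differently, though. The paper first moves both $v$ and $w$ into $U_2\oplus W$, attaches $de$ to each by a transvection, and then connects $v+de$ to $w+de+d\alpha f$ by the single transvection $T_{f,\,v^{\ast}-w^{\ast}}$ (legitimate because $v^{\ast}-w^{\ast}\in{\LL}$ exactly when $[v^{\ast}]=[w^{\ast}]$), using $d^2\mid(v,w)$ to see that the stray $f$-term is divisible by $d$. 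You instead reduce each vector separately to a normal form $de_1+w'$ with $w'\in W$ determined modulo $dW$ by $[v^{\ast}]$, via a content trade inside $U_1$ followed by reduction of $w'$ modulo $dW$. These are the same computation packaged differently: your $E_{f_1,m}$ with $m=(w'-w'')/d$ is the paper's $T_{f,\,v^{\ast}-w^{\ast}}$, and your observation that $(m,w')\in d{\Z}$ simply because $m\in{\LL}$ and $(v,{\LL})=d{\Z}$ is a slightly slicker substitute for the paper's step $d^2\mid(v,w)$. What your version buys is an explicit canonical representative for each ${\GL}$-orbit; what the paper's buys is avoiding the content-trade step altogether, since clearing the entire $U_1\oplus U_2$-component into $U_2$ at the outset leaves $U_1$ completely free for the subsequent transvections.
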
 

Since the correspondence $v\mapsto [v^{\ast}]$ is canonical, 
Theorem \ref{thm: main intro} reduces classification of $\Gamma$-orbits 
for any group $\Gamma$ between ${\GL}$ and ${\SpL}$ 
to calculation of finite group action on ${\DL}$. 

Several authors have given criteria for equivalence of primitive vectors. 
James \cite{Ja} proposed a criterion for ${\SpL}$-equivalence. 
His ``invariants'' are, however, not unique. 
It is possible to single out a canonical one by imposing a maximality condition, 
but no effective procedure is known for achieving this 
(see ``Note added in proof'' in \cite{Ja}). 
Schellhammer (\cite{Sch} \S 4) gave an effective criterion for both ${\GL}$- and ${\SpL}$-equivalence 
in terms of the coordinates of vectors. 
His criterion looks somewhat close to the Eichler criterion, but not quite the same, 
as he works modulo natural number which depends on the coordinates of the vector. 
Compared to these predecessors, 
the advantage of our Eichler criterion would lie in the simplicity and canonicity, 
under the assumption $d_1=d_2=1$. 
 
There are certainly symplectic lattices with $d_2\ne 1$ for which the Eichler criterion still holds. 
A result of Hulek-Kahn-Weintraub (\cite{HKW} Proposition I.3.38) can be interpreted as saying that 
the Eichler criterion holds for $U\oplus U(p)$ with $p$ prime. 
We extend this result to 
$U\oplus U(p) \oplus \cdots \oplus U(p)$ 
(Proposition \ref{prop: (1,p,p)}). 
Friedland-Sankaran \cite{FS} studied the lattices $U\oplus U(t)$ with $t$ square-free. 
It is implicit in their result that the Eichler criterion still holds in this case. 

From a more wide point of view, Theorem \ref{thm: main intro} can be regarded as an instance 
demonstrating the utility of the symplectic discriminant forms. 
Both quadratic and symplectic discriminant forms were considered classically by Wall \cite{Wa}. 
While the theory of quadratic discriminant forms was developed systematically by Nikulin \cite{Ni} 
and has found many applications,  
the symplectic ones seem to have not been paid much attention. 

Our second purpose in this note is to refine the Eichler criterion in the following aspect. 
We are interested in when a primitive vector $v\in {\LL}$ is \textit{splitting}, i.e., 
a part of a rank $2$ sublattice ${\LL}'$ with ${\LL}={\LL}'\oplus ({\LL}')^{\perp}$. 
We call an element $x$ of ${\DL}$ of order $d$ \textit{splitting} if 
it is contained in a subgroup $D'\simeq ({\Z}/d)^{\oplus 2}$ with 
${\DL}=D'\oplus (D')^{\perp}$. 
We prove the following. 

\begin{theorem}[Theorem \ref{thm: splitting}]\label{thm: splitting intro} 
Assume that $d_1=d_2=1$. 
A primitive vector $v\in {\LL}$ is splitting if and only if the element $[v^{\ast}]\in {\DL}$ is splitting. 
\end{theorem}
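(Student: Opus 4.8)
We first treat the ``only if'' direction, which does not use the hypothesis $d_1=d_2=1$. Suppose $v$ is splitting, with ${\LL}={\LL}'\oplus ({\LL}')^{\perp}$ and $v\in {\LL}'$, where ${\LL}'$ has rank $2$. The restriction of the form to ${\LL}'$ is nondegenerate, so ${\LL}'\simeq U(d)$ for some $d$; write ${\LL}'=\langle e, f\rangle$ with $(e,f)=d$ and $v=ae+bf$, $\gcd(a,b)=1$. Since $v\perp ({\LL}')^{\perp}$ we have $(v,{\LL})=(v,{\LL}')=d{\Z}$, so ${\dv}(v)=d$ and $v^{\ast}=(ae+bf)/d$. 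Because $({\LL}')^{\vee}$ is generated by $e/d, f/d$ and ${\DL}=D_{{\LL}'}\oplus D_{({\LL}')^{\perp}}$ orthogonally, the class $[v^{\ast}]=a[e/d]+b[f/d]$ lies in $D':=D_{{\LL}'}\simeq ({\Z}/d)^{\oplus 2}$ and has order $d$ (as $\gcd(a,b)=1$). Thus $[v^{\ast}]$ is splitting, with complement $(D')^{\perp}=D_{({\LL}')^{\perp}}$.

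For the converse I would use Theorem~\ref{thm: main intro}. Being splitting is invariant under ${\SpL}$, hence under ${\GL}$; so by the bijectivity in Theorem~\ref{thm: main intro} it suffices, given a splitting element $x=[v^{\ast}]\in{\DL}$ of order $d$, to exhibit a single splitting vector $w\in{\LL}$ with $[w^{\ast}]=x$: then $w$ and $v$ are ${\GL}$-equivalent, whence $v$ is splitting too. The plan is to produce $w$ inside a rank $2$ orthogonal summand $\simeq U(d)$ of ${\LL}$ whose discriminant is \emph{exactly} the prescribed subgroup $D'\ni x$.

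The main step is therefore a realization lemma: if ${\DL}=D_1\oplus D_2$ is an orthogonal decomposition into nondegenerate finite symplectic forms, then it is induced by an orthogonal decomposition ${\LL}={\LL}_1\oplus {\LL}_2$ with $D_{{\LL}_j}=D_j$ inside ${\DL}$. Granting this with $D_1=D'$ and $D_2=(D')^{\perp}$, the rank $2$ summand ${\LL}_1$ has $D_{{\LL}_1}=D'\simeq({\Z}/d)^{\oplus 2}$, so ${\LL}_1\simeq U(d)$; writing $x=\alpha[e/d]+\beta[f/d]$ and lifting $\alpha,\beta$ to coprime integers $a,b$, the vector $w=ae+bf$ is a primitive element of ${\LL}_1$ with ${\dv}(w)=d$ and $[w^{\ast}]=x$, as required. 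To prove the lemma I would first build an abstract model ${\LL}_0=U(d)\oplus {\LL}'_2$, where ${\LL}'_2$ is any symplectic lattice with $D_{{\LL}'_2}\simeq D_2$ and rank $2g-2$ (possible since $d_1=d_2=1$ leaves room to pad with unimodular planes); by the structure theory ${\LL}_0$ and ${\LL}$ share the same type, hence are isometric. An isometry ${\LL}_0\simeq{\LL}$ transports the decomposition to one realizing \emph{some} orthogonal splitting of ${\DL}$ isomorphic to $D_1\oplus D_2$, and it then remains to move this splitting to the given $D_1\oplus D_2$.

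That last adjustment is the main obstacle, and it is where the hypothesis $d_1=d_2=1$ enters a second time. It requires two facts: a Witt-type extension theorem for finite symplectic forms, ensuring the two copies of the order-$d$ summand $\simeq({\Z}/d)^{\oplus 2}$ are conjugate under ${\rm Sp}({\DL})$; and the surjectivity of the reduction ${\SpL}\to{\rm Sp}({\DL})$, which lets one lift the conjugating automorphism of ${\DL}$ to an isometry of ${\LL}$ and thereby align the decomposition with $D_1\oplus D_2$. The surjectivity is the genuinely lattice-theoretic input — the symplectic analogue of Nikulin's surjectivity theorem — and the presence of the two unimodular planes is exactly what should make it available; I expect it to follow from, or be established alongside, the machinery behind Theorem~\ref{thm: main intro}.
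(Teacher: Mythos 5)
Your ``only if'' direction is correct (the paper dismisses it in one line, and your verification is a fine expansion of that). Your opening reduction for the converse is also sound and is in fact the same reduction the paper makes implicitly: splitting is an ${\SpL}$-invariant property, so by Theorem \ref{thm: main intro} it suffices to exhibit one splitting vector $w$ with $[w^{\ast}]=[v^{\ast}]$. The problem is everything after that. Your plan to produce $w$ rests on three substantial assertions, none of which you prove and none of which are available in the paper: (a) a classification/cancellation theory for finite symplectic forms strong enough to guarantee that $U(d)\oplus{\LL}'_2$ can be arranged to have the same type as ${\LL}$; (b) a Witt-type extension theorem saying any two nondegenerate subgroups of ${\DL}$ isomorphic to $({\Z}/d)^{\oplus 2}$ are conjugate under ${\rm Sp}({\DL})$; and (c) surjectivity of ${\SpL}\to{\rm Sp}({\DL})$, which you explicitly defer with ``I expect it to follow.'' Item (c) in particular is a genuine theorem (the symplectic analogue of Nikulin's result) that is nowhere established in this paper, and your argument collapses without it. As written, the proposal is a program whose hardest steps are postulated rather than proved.

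The paper's actual argument shows this machinery is unnecessary. After writing ${\LL}=U\oplus{\LL}'$ and moving $v$ into ${\LL}'$ by Lemma \ref{lem: save U}, one replaces $v$ by the ${\GL}$-equivalent vector $v+df$ (Step \ref{step+1}), takes $y\in{\DL}$ with $([v^{\ast}],y)=1/d$ from condition (4) of Lemma \ref{lem: split}, lifts it to $w'\in({\LL}')^{\vee}$, and sets $w=dw'+\alpha de$ where $\alpha$ is chosen so that $(v,dw')=d+\alpha d^{2}$. A direct computation gives $(v+df,w)=d$ and ${\dv}(w)=d$, so $\langle v+df,\,w\rangle\simeq U(d)$ splits off and $v+df$ is splitting. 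In other words, rather than realizing the entire orthogonal decomposition ${\DL}=D_1\oplus D_1^{\perp}$ at the lattice level, one only needs to realize the single dual element $y$ as a lattice vector pairing correctly with $v$ --- a purely local construction using the hyperbolic plane $U$ as a correction term, exactly parallel to the surjectivity proof of Proposition \ref{prop: surjective}. If you want to salvage your approach, you would have to supply proofs of (a)--(c), which would amount to a separate paper's worth of structure theory.
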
    

This reduces classification of splitting vectors of ${\LL}$ to that of splitting elements of ${\DL}$. 
The latter is much easier (\S \ref{ssec: classify split}). 

Theorem \ref{thm: main intro} and Theorem \ref{thm: splitting intro} 
will be used in our study of rank $1$ degeneration of universal abelian varieties. 
In fact, this was our motivation to establish the symplectic Eichler criterion.

\section{The Eichler criterion}

Let us begin by fixing the notation and terminology. 
Let ${\LL}$ be a symplectic lattice. 
We identify the dual lattice of ${\LL}$ with the sublattice 
\begin{equation*}
{\Lv} = \{ \: v\in {\LQ} \: | \: (v, {\LL})\subset {\Z} \: \}
\end{equation*}
of ${\LQ}$. 
Then ${\LL}\subset {\Lv}$, 
and the quotient group ${\DL}={\Lv}/{\LL}$ is called the \textit{discriminant group} of ${\LL}$. 
The symplectic form on ${\LL}$ extends to 
${\Lv}\times {\Lv}\to {\Q}$. 
This descends to a finite symplectic form 
\begin{equation*}
{\DL}\times {\DL}\to {\Q}/{\Z}, 
\end{equation*}
which we call the \textit{discriminant form} of ${\LL}$. 
The double dual relation ${\LL}^{\vee \vee}={\LL}$ assures that 
the discriminant form is \textit{nondegenerate}, i.e., 
$(x, {\DL})\not\equiv 0$ for any nonzero element $x$ of ${\DL}$. 
This property in turn implies that 
any homomorphism ${\DL}\to {\Q}/{\Z}$ is represented by 
the pairing with an element of ${\DL}$. 

The \textit{symplectic group} ${\SpL}$ of ${\LL}$ is the group of isometries of ${\LL}$. 
We define the finite symplectic group ${\rm Sp}({\DL})$ similarly. 
We denote by ${\GL}$ the kernel of the natural map ${\SpL}\to {\rm Sp}({\DL})$. 

A vector $v\in {\LL}$ is called \textit{primitive} if ${\Q}v\cap {\LL}={\Z}v$. 
For such a vector $v$, 
we denote by ${\dv}(v)$ the positive generator of the ideal $(v, {\LL})$ of ${\Z}$. 
The vector $v^{\ast}=v/{\dv}(v)$ is contained in ${\Lv}$ by definition. 
We denote by $[v^{\ast}]$ its class in ${\DL}$. 
By the primitivity of $v$, the order of $[v^{\ast}]$ as an element of the finite abelian group ${\DL}$ is equal to ${\dv}(v)$. 

In this section we prove the following.  

\begin{theorem}\label{thm: main}
Assume $d_1=d_2=1$. 
Then the map 
\begin{equation}\label{eqn: basic map}
\{ \: v\in {\LL} \: | \; \textrm{primitive} \: \} / {\GL} \longrightarrow {\DL}, \quad v\mapsto [v^{\ast}], 
\end{equation}
is bijective. 
\end{theorem}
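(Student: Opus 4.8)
The plan is to dispatch well-definedness and surjectivity quickly, and to devote the main effort to injectivity, which I will establish by reducing an arbitrary primitive vector to a canonical shape using explicitly constructed elements of $\GL$. The basic building blocks are the symplectic transvections $t_w^\lambda(x)=x+\lambda(x,w)w$ for $w\in\LL$, $\lambda\in\Z$. A direct computation shows each $t_w^\lambda$ is an isometry (because the form is alternating, so $(w,w)=0$) and acts trivially on $\DL=\Lv/\LL$ (because $(x,w)\in\Z$ already for $x\in\Lv$, $w\in\LL$), so $t_w^\lambda\in\GL$. From these I obtain the auxiliary moves $S_{u,a}^\nu(x)=x+\nu(x,u)a+\nu(x,a)u$ for $u,a\in\LL$ with $(u,a)=0$, which lie in $\GL$ since $S_{u,a}^\nu=t_{u+a}^{\nu}\,t_u^{-\nu}\,t_a^{-\nu}$. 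I will also use that $\mathrm{Sp}(U_1\oplus U_2)=\mathrm{Sp}_4(\Z)$, generated by transvections and hence contained in $\GL$, acts transitively on vectors of a given content; here I write $\LL=U_1\oplus U_2\oplus M$ with $U_i=\langle e_i,f_i\rangle$ and $M=U(d_3)\oplus\cdots\oplus U(d_g)$, using $d_1=d_2=1$.

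Well-definedness is immediate: $\gamma\in\GL$ preserves $\dv$ and fixes $\DL$ pointwise, so $[(\gamma v)^{\ast}]=[\gamma v^{\ast}]=[v^{\ast}]$. For surjectivity, given $x\in\DL=M^{\vee}/M$ of order $d$, I lift $x$ to $\xi\in M^{\vee}$ and set $v_0=d\,e_1+d\xi$; one checks that $d\xi\in M$, that $v_0$ is primitive with $\dv(v_0)=d$, and that $[v_0^{\ast}]=[\xi]=x$.

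The heart is injectivity, carried out in two phases. In Phase 1 I reduce a primitive $v$, with $d=\dv(v)$ and $M$-component $v_M$, to $d\,e_1+v_M$ without ever changing $v_M$: first $\mathrm{Sp}_4(\Z)$ brings the $U_1\oplus U_2$-component to $c\,e_1$ of content $c$; then, choosing $m^{\ast}\in M$ with $(v_M,m^{\ast})$ generating the ideal $(v_M,M)=e\Z$, the move $S_{e_2,m^{\ast}}^1$ deposits $e\,e_2$ into the second plane cleanly, since $(c\,e_1+v_M,e_2)=0$, giving $U$-component $c\,e_1+e\,e_2$ of content $\gcd(c,e)=d$; a further application of $\mathrm{Sp}_4(\Z)$ returns this to $d\,e_1$. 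All these moves fix $v_M$, and afterwards $v_M/d\in M^{\vee}$ represents $[v^{\ast}]$ (the condition $v_M/d\in M^\vee$ being equivalent to $\dv(v)=d$). In Phase 2 I prove the key lemma that $d\,e_1+v_M\sim_{\GL} d\,e_1+v_M+d\mu$ for every $\mu\in M$: applying $S_{f_1,\mu}^1$ produces $d\,e_1+(v_M+d\mu)+k\,f_1$ with $k=(v_M,\mu)$, and since $v_M/d\in M^{\vee}$ forces $(v_M,M)\subseteq d\Z$ we have $d\mid k$, so $t_{f_1}^{-k/d}$ clears the $f_1$-component without disturbing the rest. Combining the two phases, any primitive $v$ with $[v^{\ast}]=x$ and $\dv(v)=d$ is $\GL$-equivalent to $d\,e_1+m$ for every $m\in M$ with $m/d\in M^{\vee}$ representing $x$; since two such vectors $v,w$ then have $M$-components in the same coset of $dM$, they are equivalent, proving injectivity.

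The main obstacle is Phase 2. The naive transvection and $S$-moves change $v_M$ only within $eM\subseteq dM$, which is generally too coarse to match two arbitrary lifts of $x$. The resolution uses the first unimodular plane: pairing against $f_1$ sees the full divisor $d$ rather than $e$, which lets me inject changes lying in all of $dM$, while the divisibility $(v_M,M)\subseteq d\Z$ is exactly what is needed to clear the resulting $f_1$-term integrally. This is also where the hypothesis $d_1=d_2=1$ enters twice over: the second plane provides the clean content reduction in Phase 1, and the first provides the coset adjustment in Phase 2.
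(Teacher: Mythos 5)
Your proof is correct and follows essentially the same route as the paper: surjectivity via the vector $de_1+d\xi$, and injectivity via ${\rm Sp}_4(\Z)$-transitivity on the unimodular $U\oplus U$ part together with transvections anchored at $e_1$ and $f_1$, with your divisibility $d\mid(v_M,\mu)$ playing exactly the role of the paper's $d^2\mid(v,w)$. The only (harmless) repackaging is that you reduce both vectors to a normal form $de_1+m$ and prove a coset-adjustment lemma, where the paper instead chains $v\sim v+de\sim w+de+d\alpha f\sim w+de\sim w$ directly.
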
 

Note that the map \eqref{eqn: basic map} is well-defined (without assuming $d_1=d_2=1$) 
because ${\GL}$ acts on ${\DL}$ trivially by definition. 
The map \eqref{eqn: basic map} is clearly ${\SpL}$-equivariant. 
Therefore, for any subgroup $\Gamma$ of ${\SpL}$ containing ${\GL}$, 
the map \eqref{eqn: basic map} induces the bijective map 
\begin{equation*}
\{ \: v\in {\LL} \: | \; \textrm{primitive} \: \} / \Gamma \longrightarrow {\DL}/G 
\end{equation*}
where $G\simeq \Gamma/{\GL}$ is the image of $\Gamma \to {\rm Sp}({\DL})$. 

The basic line of the proof of Theorem \ref{thm: main} is similar to 
that of the orthogonal Eichler criterion (\cite{Sca}, \cite{FH}, \cite{GHS}). 
In the orthogonal case, a special type of isometries known as \textit{Eichler transvections} played a key role in the proof. 
We have the symplectic version of transvections as well,  
and they play a central role in our proof of Theorem \ref{thm: main}. 

More specifically, we first prove the surjectivity of \eqref{eqn: basic map} in \S \ref{ssec: surjective}. 
After recalling the symplectic transvections in \S \ref{ssec: transvection}, 
we deduce the injectivity of \eqref{eqn: basic map} in \S \ref{ssec: injective}. 
In \S \ref{ssec: (1,p,p)}, we prove that the map \eqref{eqn: basic map} is still bijective for 
$U\oplus U(p) \oplus \cdots \oplus U(p)$.

\subsection{Surjectivity}\label{ssec: surjective}

In this subsection we prove the following. 

\begin{proposition}\label{prop: surjective}
Assume that $d_1=1$. 
Then the map \eqref{eqn: basic map} is surjective. 
\end{proposition}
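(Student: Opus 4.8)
The plan is to exploit the unimodular hyperbolic plane that splits off when $d_1=1$ in order to realize every class of $\DL$ as $[v^{\ast}]$ for an explicitly constructed primitive vector. Since $d_1=1$, I would write $\LL=U\oplus M$, where $U=\langle e,f\rangle$ is the unimodular plane with $(e,f)=1$ and $M=U(d_2)\oplus\cdots\oplus U(d_g)$. Because $U$ is unimodular, $\Lv=U\oplus M^{\vee}$, which gives a canonical identification $\DL\cong M^{\vee}/M$. In particular every class $x\in\DL$ admits a representative $w\in M^{\vee}$ orthogonal to $U$; let $d$ denote the order of $x$, so that $dw\in M\subset\LL$.

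Given such $x$ and $w$, I would set
\[
v = de + dw = d(e+w),
\]
which lies in $\LL$ since $de\in U$ and $dw\in M$. The proof then reduces to three verifications carried out in order. First, $v$ is primitive: if $v/p\in\LL$ for a prime $p$, then comparing $U$-components forces $p\mid d$, while comparing $M$-components forces $(d/p)w\in M$; but $w$ has order exactly $d$ in $M^{\vee}/M$, so $(d/p)w\notin M$, a contradiction. Second, $\dv(v)=d$: since $w\perp U$, one computes $(v,e)=0$ and $(v,f)=d(e,f)=d$, while $(v,M)=d(w,M)\subseteq d\Z$ because $w\in M^{\vee}$; hence $(v,\LL)=d\Z$. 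Third, $v^{\ast}=v/\dv(v)=e+w$, and therefore $[v^{\ast}]=[e+w]=[w]=x$ because $e\in\LL$. This exhibits an arbitrary $x$ in the image, proving surjectivity.

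The one genuinely delicate point is arranging the divisibility to be \emph{exactly} $d$ rather than a proper divisor or multiple, and it is precisely here that the hypothesis $d_1=1$ enters: the pairing $(v,f)=d(e,f)$ equals $d$ only because $(e,f)=1$, i.e.\ because the split plane is unimodular, and the same unimodularity guarantees that the $U$-component of $v^{\ast}$ lies in $\LL$ and so vanishes in $\DL$. The primitivity check is a short coordinatewise argument resting only on the fact that $w$ has order $d$. I do not anticipate any essential obstacle beyond tracking these compatibility conditions; notably, no use of the symplectic structure of $M$ beyond the presence of the unimodular summand is required for surjectivity.
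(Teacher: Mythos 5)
Your proposal is correct and follows essentially the same route as the paper: split off the unimodular plane $U=\langle e,f\rangle$, lift $x$ to $w\in M^{\vee}$, take $v=de+dw$, and verify primitivity, $\dv(v)=d$, and $[v^{\ast}]=x$ exactly as the paper does (your prime-by-prime primitivity check and direct computation of the ideal $(v,\LL)$ are only cosmetic variants of the paper's Steps 1 and 2).
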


\begin{proof}
Let $x$ be an element of ${\DL}$. 
We write $d={\rm ord}(x)$. 
What has to be done is to find a primitive vector $v$ of ${\LL}$ with $[v^{\ast}]=x$. 

We write ${\LL}=U\oplus {\LL}'$ by the assumption $d_1=1$. 
We choose a vector $w\in ({\LL}')^{\vee}$ with 
$[w]=x$ in $D_{{\LL}'}={\DL}$. 
Then $dw\in {\LL}'$, but at this moment we do not know even whether it is primitive. 
So we modify $dw$ as follows. 
We choose a basis $e, f$ of $U$ with $(e, f)=1$. 
Then we put 
\begin{equation*}
v = de + dw \; \; \in  {\LL}. 
\end{equation*}
We prove that $v$ satisfies the desired properties. 

\begin{step}
$v$ is primitive in ${\LL}$. 
\end{step}

\begin{proof}
If $v/d'\in {\LL}$ for some natural number $d'>1$, 
then $(d/d')e\in U$ implies $d'|d$. 
If we denote $d''=d/d'$, 
we have $d''e+d''w\in {\LL}$. 
This implies $d''w\in {\LL}'$ and so 
${\rm ord}(x)\leq d'' < d$, 
which contradicts to ${\rm ord}(x)=d$. 
\end{proof}

\begin{step}
We have ${\dv}(v)=d$. 
\end{step}

\begin{proof}
Since $(v, f)=(de, f)=d$, 
we see that ${\dv}(v)|d$. 
On the other hand, we have 
$(v, {\LL})=d(e+w, {\LL})\subset d{\Z}$ 
because $e+w\in {\LL}^{\vee}$. 
This implies $d| {\dv}(v)$. 
\end{proof}

Finally, we have 
\begin{equation*}
[v^{\ast}] = [v/d] = [w+e] = [w] = x 
\end{equation*}
in ${\DL}$. 
This finishes the proof of Proposition \ref{prop: surjective}. 
\end{proof}

\subsection{Transvections}\label{ssec: transvection}

Let ${\LL}$ be a symplectic lattice. 
Let $l, m$ be vectors of ${\LQ}$ with $(l, m)=0$. 
We define a ${\Q}$-linear isomorphism 
$T_{l,m}\colon {\LQ} \to {\LQ}$ by 
\begin{equation*}
T_{l,m}(v) = v + (l, v)m + (m, v)l, \quad v\in {\LQ}. 
\end{equation*}
The condition $(l, m)=0$ ensures that $T_{l,m}\in {\rm Sp}({\LQ})$. 
We call $T_{l,m}$ a \textit{transvection}. 
This type of isometries have been classically considered. 
If we fix $l$ and run $m$, they generate the unipotent radical of the stabilizer of $l$ in ${\rm Sp}({\LQ})$. 
The relevance to the present paper comes from the following observation. 

\begin{lemma}\label{lem: transvection}
If $l, m\in {\LL}$, then $T_{l,m}\in {\GL}$. 
\end{lemma}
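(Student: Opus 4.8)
The plan is to verify directly the two conditions defining membership in ${\GL}$: first that $T_{l,m}$ restricts to an \emph{automorphism} of the lattice ${\LL}$, and second that the automorphism it induces on ${\DL}={\Lv}/{\LL}$ is the identity. Since $l, m\in {\LL}$ satisfy $(l,m)=0$, we already know from the discussion above that $T_{l,m}\in {\rm Sp}({\LQ})$, so the form is automatically preserved and only integrality has to be tracked.

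First I would show $T_{l,m}({\LL})\subseteq {\LL}$. For $v\in {\LL}$ the pairings $(l,v)$ and $(m,v)$ are integers, because the form is ${\Z}$-valued on ${\LL}\times {\LL}$; hence $T_{l,m}(v)=v+(l,v)m+(m,v)l$ is a ${\Z}$-linear combination of $v, m, l\in {\LL}$ and thus lies in ${\LL}$. To upgrade this from an endomorphism to an automorphism, I also need the inverse to preserve ${\LL}$. Here I would record the clean observation that the displacement operator $N\colon v\mapsto (l,v)m+(m,v)l$ satisfies $N^2=0$: indeed $N(l)=(l,l)m+(m,l)l=0$ and $N(m)=(l,m)m+(m,m)l=0$, using that the form is alternating and $(l,m)=0$. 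Consequently $T_{l,m}=I+N$ has inverse $I-N=T_{-l,m}$, which is again a transvection with $-l, m\in {\LL}$, so the same computation gives $T_{-l,m}({\LL})\subseteq {\LL}$. Therefore $T_{l,m}\in {\SpL}$.

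Next I would show that $T_{l,m}$ acts trivially on ${\DL}$. The key point is that for any $x\in {\Lv}$ the pairings $(l,x)$ and $(m,x)$ are still integers, since $l, m\in {\LL}$ and $x\in {\Lv}$ pairs integrally with ${\LL}$. Hence $T_{l,m}(x)-x=(l,x)m+(m,x)l$ is an integral combination of $l, m\in {\LL}$, so it lies in ${\LL}$. This simultaneously shows $T_{l,m}({\Lv})\subseteq {\Lv}$ and that $T_{l,m}(x)\equiv x \pmod{{\LL}}$ for every $x\in {\Lv}$, i.e.\ the induced map on ${\DL}$ is the identity. Combined with the previous paragraph, this places $T_{l,m}$ in the kernel ${\GL}$ of ${\SpL}\to {\rm Sp}({\DL})$.

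The computations are all elementary, so there is no serious obstacle; the only point requiring a little care is the distinction between $T_{l,m}$ being a mere endomorphism of ${\LL}$ and being an automorphism, which is exactly what the nilpotency $N^2=0$ resolves by exhibiting the inverse explicitly as another integral transvection.
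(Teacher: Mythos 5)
Your proof is correct and follows essentially the same route as the paper: check integrality of the pairings to get $T_{l,m}({\LL})\subseteq{\LL}$, then check the same for ${\Lv}$ to see the induced map on ${\DL}$ is the identity. The only addition is your explicit verification via $N^2=0$ that the inverse $T_{-l,m}$ also preserves ${\LL}$, a point the paper leaves implicit (it also follows since $T_{l,m}\in{\rm Sp}(\LQ)$ has determinant $1$); this is a welcome bit of care but not a different argument.
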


\begin{proof}
If $v\in {\LL}$, both $(l, v)$ and $(m, v)$ are integers, 
so $T_{l,m}(v)$ is contained in ${\LL}$. 
Hence $T_{l,m}\in {\SpL}$. 
Next, for $v\in {\Lv}$, $(l, v)$ and $(m, v)$ are still integers. 
This shows that $T_{l,m}(v)\equiv v$ mod ${\LL}$. 
Thus $T_{l,m}\in {\GL}$. 
\end{proof}

\subsection{Injectivity}\label{ssec: injective}

In this subsection we prove the injectivity of the map \eqref{eqn: basic map}. 
This amounts to the following. 

\begin{proposition}\label{prop: injective}
Assume $d_1=d_2=1$. 
Let $v, w$ be primitive vectors of ${\LL}$ with 
$[v^{\ast}]=[v^{\ast}]\in {\DL}$. 
Then $v$ and $w$ are ${\GL}$-equivalent. 
\end{proposition}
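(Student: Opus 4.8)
The plan is to prove that any two primitive vectors $v, w$ with $[v^*] = [w^*]$ can be connected by transvections in $\GL$. Using the assumption $d_1 = d_2 = 1$, I would write $\LL = U_1 \oplus U_2 \oplus \LL''$, giving two hyperbolic planes to work with, just as in the orthogonal Eichler criterion where two hyperbolic planes provide the necessary room to maneuver. By Lemma \ref{lem: transvection}, every transvection $T_{l,m}$ with $l, m \in \LL$ lies in $\GL$, so these are exactly the moves I am allowed to make. The common value $d := \dv(v) = \dv(w)$ (equal to $\ord[v^*]$) and the common class $[v^*] = [w^*]$ are the invariants that must be preserved and exploited.

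The key strategy will be to bring an arbitrary primitive $v$ into a fixed normal form depending only on $(d, [v^*])$, and then argue that $w$ reduces to the same normal form. Fixing a basis $e_1, f_1$ of $U_1$ and $e_2, f_2$ of $U_2$, a natural candidate for the normal form is $v_0 = d e_1 + u$, where $u \in \LL''^\vee$ is a fixed lift of the class $x = [v^*]$ into $(U_2 \oplus \LL'')$ (mirroring the surjectivity construction in Proposition \ref{prop: surjective}). So the first step is to show that any primitive $v$ with $\dv(v) = d$ and $[v^*] = x$ is $\GL$-equivalent to $v_0$. I would write $v$ in coordinates relative to $U_1 \oplus (U_2 \oplus \LL'')$ and use transvections $T_{e_1, m}$ and $T_{f_1, m}$ with $m$ ranging over $\LL$ to adjust the component of $v$ in $U_2 \oplus \LL''$: since $(e_1, v) = 0$ or is controllable, pairing against $e_1$ and $f_1$ lets me add arbitrary integer multiples of $d e_1$, $d f_1$, and $d \cdot (\text{anything})$ to the other components. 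The second hyperbolic plane $U_2$ is what gives the extra freedom to clean up the part of $v$ orthogonal to $U_1$ and align its $U_1$-component with $d e_1$.

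The main obstacle will be the final alignment step: after using transvections to normalize the $U_1$-component to $d e_1$, I must show the remaining component in $U_2 \oplus \LL''$ can be moved to the fixed lift $u$ \emph{without disturbing} the already-normalized part and without changing the discriminant class. Here the delicate point is that the component lies in $\LL'' \oplus U_2$ with a prescribed image $x$ in $\DL$, but its actual lift is only determined modulo $\LL$; two primitive vectors with the same $[v^*]$ differ, in that component, by an element of $\LL'' \oplus U_2$ reducing to $0$ in the discriminant group, i.e.\ by a genuine lattice vector. The crux is to realize every such correction by transvections. I expect to use transvections of the form $T_{e_1, m}$: since $(e_1, v_0) = 0$ after normalization while $(f_1, v_0) = d$, applying $T_{e_1, m}$ fixes the $d e_1$ term and adds $(f_1\text{-type contributions}) \cdot m$; choosing $m \in U_2 \oplus \LL''$ appropriately should produce exactly the needed integral shifts. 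The nondegeneracy of the discriminant form and the primitivity of $v$ (forcing $x$ to have order exactly $d$) will be needed to guarantee that the required $m$ exist in $\LL$ rather than merely in $\LL^\vee$. Once $v$ and $w$ are both shown $\GL$-equivalent to $v_0$, transitivity of the equivalence relation completes the proof.
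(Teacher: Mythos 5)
Your overall route is the same as the paper's: anchor transvections at a hyperbolic basis $e_1,f_1$ of $U_1$, exploit that $v^{\ast}-w^{\ast}$ is a genuine lattice vector when $[v^{\ast}]=[w^{\ast}]$, and pass through the intermediate vector $de_1+(\text{lift of }x)$. But two steps in your sketch do not work as stated. First, the transvection you designate for the main correction is the wrong one: once $v$ has been normalized to $v_0=de_1+v'$ with $v'\in U_2\oplus{\LL}''$, your own observation $(e_1,v_0)=0$ gives $T_{e_1,m}(v_0)=v_0+(m,v_0)e_1$, which only perturbs the $e_1$-coefficient and cannot touch the $U_2\oplus{\LL}''$-component at all. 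The useful map is $T_{f_1,z}$ with $z=v^{\ast}-w^{\ast}\in{\LL}$: since $(f_1,v_0)=-d$, it adds $-dz=w-v$ as desired, but it also produces a residual term $(z,v)f_1=d^{-1}(v,w)f_1$ that your plan never accounts for.

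Second, and this is the essential missing ingredient: to remove that residual $f_1$-term while keeping the $e_1$-coefficient equal to $d$ (an ${\rm SL}(2,{\Z})$-move inside $U_1$, whose orbits are governed by the gcd of the coordinates) one needs $d\mid d^{-1}(v,w)$, i.e.\ $d^2\mid(v,w)$. This divisibility is not a consequence of primitivity alone; it is exactly where the hypothesis $[v^{\ast}]=[w^{\ast}]$ must be used a second time, via the integrality of $(v^{\ast}-w^{\ast},w^{\ast})=d^{-2}(v,w)$ (the paper's Step \ref{step+2}). Without it the normalization cannot be completed. A lesser but real gap: the initial alignment of the $U_1\oplus U_2$-component (putting $v$ into $U_2\oplus{\LL}''$ before adding $de_1$) is achieved in the paper not by transvections but by the transitivity of ${\rm Sp}(4,{\Z})$ on primitive vectors of the unimodular lattice $U_1\oplus U_2$ (Lemma \ref{lem: save U}); this is the only place $d_2=1$ enters, and your appeal to the ``extra freedom'' provided by $U_2$ needs to be replaced by this (or an equivalent) concrete mechanism.
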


The proof is divided into several steps. 
We write 
\begin{equation}\label{eqn: 2U split} 
{\LL} = U_1 \oplus {\LL}' = U_1 \oplus U_2 \oplus {\LL}'' 
\end{equation}
by the assumption $d_1=d_2=1$, 
where $U_1$ and $U_2$ are copies of $U$. 
It would be convenient for later use to separate the first step in advance. 

\begin{lemma}\label{lem: save U}
Every vector of ${\LL}$ can be transformed by the ${\GL}$-action to a vector of ${\LL}'$. 
\end{lemma}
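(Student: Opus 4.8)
The plan is to eliminate the $U_1$-component of a given vector by a sequence of elements of $\GL$, exploiting the two unimodular planes $U_1,U_2$ provided by the hypothesis $d_1=d_2=1$. Fix symplectic bases $e_1,f_1$ of $U_1$ and $e_2,f_2$ of $U_2$ with $(e_i,f_i)=1$, and write a given vector as $v=a_1e_1+b_1f_1+w$ with $w\in\LL'$. First I would record a convenient source of elements of $\GL$: since $U_1$ and $U_2$ are unimodular, any isometry of $U_1$ (resp. $U_2$) extends by the identity on the complementary summand to an element of $\SpL$ which acts trivially on $\DL=D_{\LL''}$, hence lies in $\GL$. Using ${\rm Sp}(U_1)={\rm SL}_2(\Z)$ I reduce the pair $(a_1,b_1)$ to $(g,0)$ with $g=\gcd(a_1,b_1)$, so that $v=ge_1+w$; applying the analogous reduction inside $U_2$ to the $U_2$-component of $w$, I reach
\begin{equation*}
v=ge_1+he_2+u,\qquad u\in\LL'',
\end{equation*}
where $h$ is the gcd of the original $U_2$-coordinates.

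The heart of the argument is a Euclidean algorithm carried out \emph{across} the two planes. I would use the transvections $T_{e_1,f_2}$ and $T_{e_2,f_1}$, both of which lie in $\GL$ by Lemma \ref{lem: transvection} since all the vectors involved lie in $\LL$. A direct computation, using $(e_1,v)=(e_2,v)=0$ and $(f_2,v)=-h$, $(f_1,v)=-g$, gives
\begin{equation*}
T_{e_1,f_2}(v)=(g-h)e_1+he_2+u,\qquad T_{e_2,f_1}(v)=ge_1+(h-g)e_2+u,
\end{equation*}
while the $\LL''$-part $u$ is left untouched. Thus these two transvections and their inverses act on the pair $(g,h)$ exactly by the elementary steps of the Euclidean algorithm. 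Iterating, I can drive the $e_1$-coefficient to $0$, arriving at $v=\gcd(g,h)\,e_2+u$, which lies in $U_2\oplus\LL''=\LL'$, as required. (Vectors that are not primitive cause no difficulty, since the construction is applied to the underlying primitive vector and the action is $\Z$-linear.)

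The only place the hypothesis $d_1=d_2=1$ enters, and the sole point needing care, is the verification that every move stays inside $\GL$ rather than merely in $\SpL$: the ${\rm SL}_2(\Z)$-reductions are admissible precisely because the unimodularity of $U_1$ and $U_2$ makes $\DL=D_{\LL''}$ insensitive to them, and the cross-plane transvections are covered by Lemma \ref{lem: transvection}. Beyond this bookkeeping the construction is completely explicit, so I expect no genuine obstacle; the one thing to get right is the choice of transvections in the cross-plane step, which must couple $e_1$ to $f_2$ and $e_2$ to $f_1$ so as to modify one plane's coefficient using the other plane while fixing $u$.
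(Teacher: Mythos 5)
Your proof is correct and follows essentially the same route as the paper: both move the $U_1\oplus U_2$-component of $v$ into $U_2$ by an isometry of the unimodular summand $U_1\oplus U_2$ extended by the identity on $\LL''$, which lies in $\GL$ because it fixes $(\LL'')^{\vee}$ pointwise. The only difference is that the paper simply quotes the transitivity of ${\rm Sp}(4,\Z)$ on primitive vectors of $U\oplus U$, whereas you reprove it explicitly via ${\rm SL}_2(\Z)$-reductions in each plane followed by a Euclidean algorithm with the cross-plane transvections $T_{e_1,f_2}$, $T_{e_2,f_1}$ --- a self-contained variant that also handles non-primitive vectors directly.
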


\begin{proof}
Let $v\in {\LL}$. 
We write $v=(v_1, v_2, v'')$ according to the decomposition \eqref{eqn: 2U split}. 
Since the unimodular lattice $U_{1} \oplus U_{2}$ has only one orbit of primitive vectors under 
${\rm Sp}(U_{1} \oplus U_{2})={\rm Sp}(4, {\Z})$, 
we can find an isometry $\gamma_0$ of $U_1\oplus U_2$ such that $\gamma_0(v_1, v_2)\in U_2$. 
If we put $\gamma = \gamma_{0}\oplus {\rm id}_{{\LL}''}$, 
then $\gamma\in {\GL}$ and $\gamma(v)\in {\LL}'$. 
\end{proof}

Actually Lemma \ref{lem: save U} will be the only place we use the assumption $d_2=1$. 
We now proceed to the proof of Proposition \ref{prop: injective}. 

\begin{proof}[(Proof of Proposition \ref{prop: injective})]
Let $v, w$ be primitive vectors of ${\LL}$ with $[v^{\ast}]=[w^{\ast}]$. 
By Lemma \ref{lem: save U}, we may assume 
\begin{equation}\label{eqn: in L'}
v, w\in {\LL}'. 
\end{equation}
We write 
\begin{equation*}
d = {\dv}(v) = {\rm ord}([v^{\ast}]) = {\rm ord}([w^{\ast}]) = {\dv}(w). 
\end{equation*}
We choose a basis $e, f$ of $U_1$ with $(e, f)=1$. 

\begin{step+}\label{step+1}
There exist $\gamma_1, \gamma_2\in {\GL}$ such that 
$\gamma_1(v) = v+de$ and $\gamma_2(w) = w+de$. 
\end{step+}

\begin{proof}
It suffices to prove this for $v$. 
Since $v\in {\LL}'$ and ${\dv}(v)=d$, 
there exists $v' \in {\LL}'$ such that $(v', v)=d$. 
If we put $\gamma_{1}=T_{e,v'}$, then 
\begin{equation*}
T_{e,v'}(v) = v + (v', v)e + (e, v)v' = v+de 
\end{equation*}
and $T_{e,v'}\in {\GL}$ by Lemma \ref{lem: transvection}. 
\end{proof}

\begin{step+}\label{step+2}
We have $d^2|(v, w)$. 
\end{step+}

\begin{proof}
By the assumption $[v^{\ast}]=[w^{\ast}]$, we have 
$v^{\ast}-w^{\ast}\in {\LL}$. 
Since $w^{\ast}\in{\Lv}$, we see that 
\begin{equation*}
(v^{\ast}-w^{\ast}, w^{\ast}) = (v^{\ast}, w^{\ast}) = d^{-2}(v, w) 
\end{equation*}
is contained in ${\Z}$. 
\end{proof}

\begin{step+}\label{step+3}
There exists $\gamma_3\in {\GL}$ such that 
\begin{equation*}
\gamma_3(v+de) = w + de + d\alpha f 
\end{equation*}
for some $\alpha\in {\Z}$. 
\end{step+}

\begin{proof}
We consider the vector $z=v^{\ast}-w^{\ast}\in {\LL}'$ as in Step \ref{step+2}. 
If we put $\gamma_{3}=T_{f,z}$, then 
$T_{f,z}\in {\GL}$ by Lemma \ref{lem: transvection} and 
\begin{eqnarray*}
T_{f,z}(v+de) 
& = & T_{f,z}(v) + dT_{f,z}(e) \\ 
& = & v+(z, v)f + d(e+(f, e)z) \\ 
& = & v - (w^{\ast}, v)f + de - d(v^{\ast} - w^{\ast}) \\ 
& = & w -d^{-1}(w, v)f + de. 
\end{eqnarray*}
Then $d^{-1}(w, v)\in d{\Z}$ by Step \ref{step+2}. 
\end{proof}

\begin{step+}\label{step+4}
There exists $\gamma_4\in {\rm Sp}(U_1)$ such that 
$\gamma_4(de+d\alpha f) = de$. 
\end{step+}

\begin{proof}
This holds because any two primitive vectors of $U_1$ 
($e+\alpha f$ and $e$ in this case) are equivalent under the action of ${\rm Sp}(U_1)\simeq {\rm SL}(2, {\Z})$. 
More explicitly, we may take 
$\gamma_{4} = T_{f, (\alpha/2)f}$. 
\end{proof}

We sum up the process so far.  
Starting form $v, w$ with $[v^{\ast}]=[w^{\ast}]$ and $v, w\in {\LL}'$, 
we have connected them as 
\begin{equation*}
v \stackrel{{\rm Step} 1}{\sim} 
v+de \stackrel{{\rm Step} 3}{\sim} 
w+ de +d\alpha f \stackrel{{\rm Step} 4}{\sim}
w+de \stackrel{{\rm Step} 1}{\sim}
w,  
\end{equation*}
where $\sim$ means ${\GL}$-equivalence. 
This proves Proposition \ref{prop: injective}. 
\end{proof}

By Proposition \ref{prop: surjective} and Proposition \ref{prop: injective}, 
the proof of Theorem \ref{thm: main} is now completed.

\subsection{The case of type $(1, p, \cdots, p)$}\label{ssec: (1,p,p)}

This subsection is a supplement to Theorem \ref{thm: main}. 
We give examples of symplectic lattices 
which do not satisfy $d_2=1$ but for which the Eichler criterion still holds. 
Our examples are generalization of \cite{HKW} Proposition I.3.38. 

\begin{proposition}\label{prop: (1,p,p)}
Let ${\LL}=U\oplus U(p) \oplus \cdots \oplus U(p)$ for a prime $p$. 
Then the conclusion of Theorem \ref{thm: main} is valid for ${\LL}$. 
\end{proposition}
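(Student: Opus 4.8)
The plan is to recycle the proof of Theorem~\ref{thm: main}, isolating the single place where the hypothesis $d_2=1$ intervened. Write $\LL=U_1\oplus\LL'$ with $\LL'=U(p)^{\oplus(g-1)}$, and fix a basis $e,f$ of $U_1$ with $(e,f)=1$; note $(\LL',\LL')\subseteq p\Z$, so every primitive vector of $\LL'$ has divisor exactly $p$. Surjectivity of \eqref{eqn: basic map} is already in hand, since Proposition~\ref{prop: surjective} assumes only $d_1=1$. For injectivity, the key point is that Steps~\ref{step+1}--\ref{step+4} in the proof of Proposition~\ref{prop: injective} use nothing beyond the unimodular plane $U_1$ (i.e.\ $d_1=1$) and apply verbatim as soon as both vectors lie in $\LL'$; the only ingredient that used $d_2=1$ was Lemma~\ref{lem: save U}. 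So the whole problem reduces to replacing that lemma. Here the decisive simplification is that $\DL\cong(\Z/p)^{\oplus 2(g-1)}$, so $[v^\ast]$ has order $1$ or $p$; as $\dv(v)={\rm ord}([v^\ast])$, every primitive $v$ satisfies $\dv(v)\in\{1,p\}$, and I would treat the two cases.

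Both reductions hinge on one elementary fact: a residue class $c+N\Z^{n}$ ($n\ge1$) contains a primitive vector precisely when $N$ is coprime to the content of $c$. The relevant moves are the transvections $T_{x,y}$ of Lemma~\ref{lem: transvection} with $x\in\LL'$, $y\in U_1$ (so $T_{x,y}\in\GL$): since pairings with $\LL'$ are multiples of $p$, such a transvection shifts the $U_1$-component of $v$ through the sublattice $\dv_{\LL'}(\mu)\,U_1$. Write $v=ae+bf+\mu$ with $\mu\in\LL'$. When $\dv(v)=1$ we have $[v^\ast]=0$, and primitivity forces $\gcd(a,b)$ to be coprime to $\dv_{\LL'}(\mu)$; after normalizing $(a,b)$ by ${\rm Sp}(U_1)\subset\GL$, the coset fact lets a single such transvection make the $U_1$-component primitive, ${\rm Sp}(U_1)$ then carries it to $e$, and $T_{f,\mu}$ clears the remaining $\mu$. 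Thus every $\dv=1$ vector is $\GL$-equivalent to $e$.

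When $\dv(v)=p$, the substitute for Lemma~\ref{lem: save U} reads: $v$ is $\GL$-equivalent to a primitive vector of $\LL'$. Primitivity forces $p\mid a$ and $p\mid b$; writing $\gcd(a,b)=pg_1$ and the content of $\mu$ as $m$, it gives $\gcd(g_1,m)=1$, while $[v^\ast]\ne0$ gives $p\nmid m$, so $\gcd(pg_1,m)=1$. Normalizing by ${\rm Sp}(U_1)$ to $b=0$, $a=pg_1$, the transvection $T_{f,x}$ shifts $\mu$ by $-pg_1x$; by the coset fact (with $N=pg_1$) a suitable $x$ turns $\mu$ into a primitive $\mu_0\in\LL'$. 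After restoring $b=0$ by ${\rm Sp}(U_1)$, the residual $U_1$-coefficient is a multiple of $p$, hence lies in $(\mu_0,\LL')=p\Z$, so one transvection $T_{e,x}$ deletes the $U_1$-part and places $v$ in $\LL'$. Doing this to both $v$ and $w$ and then running Steps~\ref{step+1}--\ref{step+4} of Proposition~\ref{prop: injective} inside $U_1\oplus\LL'$ yields $v\sim_{\GL}w$.

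The main obstacle is the primitivization of $\mu$ in the case $\dv(v)=p$. Since the form on $\LL'$ is $p$-valued, the transvections available in $\GL$ can move $\mu$ only within the coset $\mu+p\LL'$, so a primitive representative must be produced there; this is exactly where primitivity of $v$ is indispensable, as it is precisely the coprimality $\gcd(pg_1,m)=1$ that makes the coset fact applicable. Everything else --- checking that the $p$-divisible side effects of these transvections on the complementary component stay controlled and are cleared by a final transvection --- is routine bookkeeping. I expect the resulting argument to be uniform in $g$, recovering the case $U\oplus U(p)$ of \cite{HKW} as the instance $g=2$.
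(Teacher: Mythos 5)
Your proposal is correct, and its skeleton coincides with the paper's: surjectivity is quoted from Proposition \ref{prop: surjective}, $\dv(v)\in\{1,p\}$ is read off from the $p$-elementary discriminant group, the problem is reduced to showing that a $\dv=1$ vector can be moved into $U_1$ and a $\dv=p$ vector into $\LL'$ (this is exactly Claim \ref{claim: (1,p,p)} in the paper, the substitute for Lemma \ref{lem: save U}), and the endgame reruns Steps \ref{step+1}--\ref{step+4} inside $U_1\oplus\LL'$. Where you genuinely diverge is in how the reduction claim is proved: the paper treats the components pairwise, invoking the short/long vector result of \cite{HKW} (Proposition I.3.38) for $U\oplus U(p)$ and then inducting over the $U(p)$ factors, whereas you prove it directly and uniformly in $g$ with transvections $T_{x,y}$ ($x\in\LL'$, $y\in U_1$), ${\rm Sp}(U_1)$, and the elementary fact that a coset $c+N\Z^n$ ($n\ge 2$) contains a primitive vector iff $N$ is coprime to the content of $c$. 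Your route is self-contained and arguably cleaner (it recovers the HKW statement as the case $g=2$ rather than relying on it); the paper's route is shorter on the page because the rank-$4$ work is outsourced. Two small points of hygiene: the coset fact as you state it fails for $n=1$ (primitive in $\Z$ means $\pm1$), though all your applications have $n\ge 2$; and the coprimality statements you attribute to ``primitivity'' (e.g.\ $p\mid a$, $p\mid b$, and $\gcd(\gcd(a,b),\dv_{\LL'}(\mu))=\dv(v)$) really come from the hypothesis on $\dv(v)$ via $(v,\LL)=\gcd(a,b)\Z+(\mu,\LL')$, not from primitivity of $v$ alone --- worth stating precisely, since primitivity only gives $\gcd(a,b,m)=1$.
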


\begin{proof}
Since Proposition \ref{prop: surjective} is valid when $d_1=1$, 
it suffices to verify only the injectivity part. 
We write ${\LL}=U\oplus {\LL}'$ where ${\LL}'=U(p)\oplus \cdots \oplus U(p)$. 
If $v\in {\LL}$ is a primitive vector, 
then ${\dv}(v)$ is the order of an element of the $p$-elementary abelian group ${\DL}$, 
so either ${\dv}(v)=1$ or ${\dv}(v)=p$. 
The bulk of the proof of Proposition \ref{prop: (1,p,p)} is the following assertion. 

\begin{claim}\label{claim: (1,p,p)}
Let $v\in {\LL}$ be a primitive vector. 

(1) If ${\dv}(v)=1$, then $v$ is ${\GL}$-equivalent to a vector of $U$. 

(2) If ${\dv}(v)=p$, then $v$ is ${\GL}$-equivalent to a vector of ${\LL}'$. 
\end{claim}

Proposition \ref{prop: (1,p,p)} can be deduced from Claim \ref{claim: (1,p,p)} as follows. 
When ${\dv}(v)={\dv}(w)=1$, 
then $v$ and $w$ are ${\GL}$-equivalent via primitive vectors of $U$ by the case (1) of Claim \ref{claim: (1,p,p)}. 
When ${\dv}(v)={\dv}(w)=p$, 
we may assume $v, w\in {\LL}'$ by the case (2) of Claim \ref{claim: (1,p,p)}. 
Then we can repeat the argument after \eqref{eqn: in L'} in the proof of Proposition \ref{prop: injective}, 
which uses only the first $U$ component. 

Now we prove Claim \ref{claim: (1,p,p)}. 
We write $v=(v_1, v_2, \cdots, v_g)$ according to the decomposition 
${\LL}=U\oplus U(p) \oplus \cdots \oplus U(p)$ 
where $v_1\in U$ and $v_i\in U(p)$ for $i>1$. 
When ${\dv}(v)=1$, then $v_1 \not\equiv 0$ mod $p$. 
The vector $(v_1, v_2)$ of $U\oplus U(p)$ is not necessarily primitive, 
but if we write $(v_1, v_2)=\alpha (v_{1}', v_{2}')$ with $(v_{1}', v_{2}')$ primitive, 
then $v_{1}' \not\equiv 0$ mod $p$. 
Thus $(v_{1}', v_{2}')$ is a ``short vector'' in the terminology of \cite{HKW} Definition I.3.36. 
Then we can apply \cite{HKW} Proposition I.3.38  
to find $\gamma\in \Gamma_{U\oplus U(p)}$ with $\gamma(v_1, v_2)\in U$. 
Repeating this process inductively for $v_2, v_3, \cdots, v_g$, 
we can reduce the number of $U(p)$ components successively and 
finally arrive at a vector of $U$. 
This proves the case (1) of Claim \ref{claim: (1,p,p)}. 

Next let ${\dv}(v)=p$. 
Then $v_1\equiv 0$ mod $p$. 
By the primitivity of $v$, at least one of $v_2, \cdots, v_g$, say $v_2$, 
satisfies $v_2\not\equiv 0$ mod $p$. 
If we write $(v_1, v_2)=\alpha (v_{1}', v_{2}')$ with $(v_{1}', v_{2}')$ primitive in $U\oplus U(p)$, 
then $\alpha \not\equiv 0$ mod $p$ and so 
$v_{1}' \equiv 0$ mod $p$ and $v_{2}' \not\equiv 0$ mod $p$. 
Thus $(v_{1}', v_{2}')$ is a ``long vector'' in the terminology of \cite{HKW} Definition I.3.36. 
Then, by \cite{HKW} Proposition I.3.38, 
the vector $(v_1, v_2)$ of $U\oplus U(p)$ can be transformed to a vector of $U(p)$ by the $\Gamma_{U\oplus U(p)}$-action. 
This proves the case (2) of Claim \ref{claim: (1,p,p)}. 
The proof of Proposition \ref{prop: (1,p,p)} is now finished. 
\end{proof}

\section{Splitting vectors}

Let ${\LL}$ be a symplectic lattice. 
A primitive vector $v\in {\LL}$ is called \textit{splitting} 
if it is contained in a rank $2$ sublattice ${\LL}_1$ with ${\LL}={\LL}_1\oplus {\LL}_{1}^{\perp}$. 
We must have ${\LL}_1\simeq U({\dv}(v))$. 
By writing ${\LL}_1$ as $\langle v, w \rangle$, 
this condition is equivalent to the existence of a primitive vector $w\in {\LL}$ satisfying 
${\dv}(w)={\dv}(v)$ and $(v, w)={\dv}(v)$. 
The purpose of this section is to classify splitting vectors by using the discriminant form. 

\subsection{The splitting criterion}

We first define the corresponding notion for elements of ${\DL}$. 

\begin{lemma}\label{lem: split}
Let $x$ be an element of ${\DL}$ of order $d$. 
The following conditions for $x$ are equivalent. 

(1) $x$ is contained in a subgroup $D_1$ of ${\DL}$ 
which is isomorphic to $({\Z}/d)^{\oplus 2}$ and satisfies ${\DL}=D_1 \oplus D_1^{\perp}$. 

(2) $x$ is contained in a nondegenerate subgroup $D_1$ of ${\DL}$ isomorphic to $({\Z}/d)^{\oplus 2}$. 

(3) There exists an element $y$ of ${\DL}$ such that 
${\DL}=\langle x, y \rangle \oplus \langle x, y \rangle^{\perp}$. 

(4) There exists an element $y$ of ${\DL}$ of order $d$ with $(x, y)=1/d\in {\Q}/{\Z}$. 
\end{lemma}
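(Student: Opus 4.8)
The linchpin of the whole argument is a splitting principle which I would isolate first as a preliminary observation: \emph{any nondegenerate subgroup $D_1$ of $\DL$ is an orthogonal direct summand, i.e.\ $\DL = D_1 \oplus D_1^{\perp}$}. To prove it I would invoke the self-duality of $\DL$ recorded above, namely that every homomorphism $\DL \to \Q/\Z$ is the pairing with an element of $\DL$. Since $\Q/\Z$ is injective, the restriction map $\hom(\DL, \Q/\Z) \to \hom(D_1, \Q/\Z)$ is surjective; composing with the self-duality isomorphism $\DL \simeq \hom(\DL, \Q/\Z)$ shows that $z \mapsto (z, -)|_{D_1}$ maps $\DL$ onto $\hom(D_1, \Q/\Z)$ with kernel $D_1^{\perp}$, whence $|\DL| = |D_1|\cdot|D_1^{\perp}|$. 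Nondegeneracy of the form on $D_1$ gives $D_1 \cap D_1^{\perp} = 0$, and the two facts together force the direct sum. The converse is immediate: if $\DL = D_1 \oplus D_1^{\perp}$ then $D_1$ is nondegenerate, since an element of $D_1$ orthogonal to $D_1$ is then orthogonal to all of $\DL$. These two remarks give the equivalence (1) $\Leftrightarrow$ (2) at once.

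It then remains to weave (3) and (4) into the chain, for which I would prove (2) $\Rightarrow$ (4) $\Rightarrow$ (3) $\Rightarrow$ (2). For (2) $\Rightarrow$ (4), I work inside the nondegenerate group $D_1 \simeq (\Z/d)^{\oplus 2}$, whose exponent is exactly $d = {\rm ord}(x)$. The homomorphism $(x, -)|_{D_1}\colon D_1 \to \Q/\Z$ takes values in the order-$d$ subgroup $\tfrac{1}{d}\Z/\Z$, and by nondegeneracy of $D_1$ its order in $\hom(D_1, \Q/\Z)$ equals ${\rm ord}(x) = d$; hence its image is all of $\tfrac{1}{d}\Z/\Z$. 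Choosing $y \in D_1$ with $(x, y) = 1/d$, and noting that $y$ then has order divisible by $d$ hence exactly $d$, produces the element required by (4).

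For (4) $\Rightarrow$ (3), given $y$ of order $d$ with $(x,y) = 1/d$, I would check by a direct computation that $\langle x\rangle \cap \langle y\rangle = 0$ and that the restricted form is nondegenerate, so that $\langle x, y\rangle \simeq (\Z/d)^{\oplus 2}$; the splitting principle then supplies $\DL = \langle x, y\rangle \oplus \langle x, y\rangle^{\perp}$. Finally, for (3) $\Rightarrow$ (2) I set $G = \langle x, y\rangle$; being an orthogonal direct summand, $G$ is nondegenerate, and a radical computation pins down its isomorphism type. Writing $\beta = (x,y)$ and $n = {\rm ord}(\beta)$, the radical of $G$ turns out to be exactly $nG$, so nondegeneracy forces $nG = 0$; since $x \in G$ has order $d$ and $n \mid d$, this squeezes the exponent of $G$ down to $d$ and forces $n = d$ and ${\rm ord}(y) = d$. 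The same intersection argument as before then gives $G \simeq (\Z/d)^{\oplus 2}$, which is the subgroup demanded by (2).

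The conceptual heart, and the step I expect to require the most care, is the splitting principle: it is where the self-duality of the symplectic discriminant form is genuinely used, and everything else reduces to it together with elementary order bookkeeping. The only other delicate point is the isomorphism-type accounting in (3) $\Rightarrow$ (2) --- ensuring that the nondegenerate two-generated summand $\langle x, y\rangle$ is $(\Z/d)^{\oplus 2}$ rather than a larger $(\Z/d')^{\oplus 2}$ --- which the radical computation handles cleanly once one tracks the orders of $x$, $y$, and $\beta$ simultaneously.
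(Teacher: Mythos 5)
Your proof is correct and rests on the same two pillars as the paper's: that a nondegenerate subgroup of $\DL$ is automatically an orthogonal direct summand (via the self-duality of the discriminant form), and order bookkeeping for the pairing on the two-generated subgroup $\langle x, y\rangle$. You arrange the cycle of implications differently (the paper does $(1)\Rightarrow(3)\Rightarrow(4)\Rightarrow(2)\Leftrightarrow(1)$, you do $(1)\Leftrightarrow(2)$, $(2)\Rightarrow(4)\Rightarrow(3)\Rightarrow(2)$) and supply more detail, notably for the splitting principle that the paper only asserts, but the mathematical content is essentially identical.
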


\begin{proof}
(1) $\Leftrightarrow$ (2): 
The property $D_1\cap D_{1}^{\perp} = \{ 0 \}$ means that $D_1$ is nondegenerate, 
and conversely, the nondegeneracy of $D_1$ implies that the decomposition ${\DL}=D_1 \oplus D_1^{\perp}$ holds. 

(1) $\Rightarrow$ (3) and (4) $\Rightarrow$ (2) are obvious. 



(3) $\Rightarrow$ (4): 
The nondegeneracy of $\langle x, y \rangle$ implies $(x, y)=k/d$ for some $k\in ({\Z}/d)^{\times}$. 
This in turn implies ${\rm ord}(y)=d$. 
Then we take $a\in ({\Z}/d)^{\times}$ with $ak = 1$ and replace $y$ by $ay$. 
\end{proof}

We call an element $x\in {\DL}$ \textit{splitting} if the conditions in Lemma \ref{lem: split} are satisfied. 
Our result is the following. 

\begin{theorem}\label{thm: splitting}
Assume $d_1=d_2=1$. 
A primitive vector $v$ of ${\LL}$ is splitting if and only if 
the element $[v^{\ast}]$ of ${\DL}$ is splitting. 
Hence the correspondence $v\mapsto [v^{\ast}]$ defines the bijective map 
\begin{equation*}
\{ \: v\in {\LL} \: | \; \textrm{splitting} \: \} / {\GL} \longrightarrow 
\{ \: x\in {\DL} \: | \; \textrm{splitting} \: \}.  
\end{equation*}
\end{theorem}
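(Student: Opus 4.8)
The plan is to derive the displayed bijection from Theorem \ref{thm: main} together with the equivalence that $v$ is splitting if and only if $[v^{\ast}]$ is splitting. Since an isometry sends an orthogonal decomposition to an orthogonal decomposition, being splitting is an ${\SpL}$-invariant (in particular ${\GL}$-invariant) property of the vector $v$; hence the set of splitting vectors is a union of ${\GL}$-orbits, and once the equivalence is proved the bijection of Theorem \ref{thm: main} restricts to the asserted bijection onto the splitting elements of ${\DL}$. Throughout I set $d={\dv}(v)={\rm ord}([v^{\ast}])$ and use the criterion recorded before the theorem: $v$ is splitting precisely when there is a primitive $w\in {\LL}$ with ${\dv}(w)=d$ and $(v,w)=d$. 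On the discriminant side I use Lemma \ref{lem: split}, chiefly its condition (4).

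The forward implication is routine. If $v$ is splitting, choose such a $w$; then $[w^{\ast}]$ has order $d$ and
\[
([v^{\ast}],[w^{\ast}]) = (v^{\ast},w^{\ast}) = d^{-2}(v,w) = 1/d \in {\Q}/{\Z},
\]
so $[v^{\ast}]$ is splitting by Lemma \ref{lem: split}(4).

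The substance lies in the converse. I would first reduce $v$ to a convenient representative: writing ${\LL}=U_1\oplus {\LL}'=U_1\oplus U_2\oplus {\LL}''$ as in \eqref{eqn: 2U split}, with ${\DL}=D_{{\LL}'}$ and a basis $e_1,f_1$ of $U_1$ having $(e_1,f_1)=1$, the ${\GL}$-invariance of splitting together with the injectivity in Theorem \ref{thm: main} lets me replace $v$ by the representative built in the proof of Proposition \ref{prop: surjective}, namely $v_0=de_1+dw$ with $w\in ({\LL}')^{\vee}$ and $[w]=x:=[v^{\ast}]$; note $(v_0,e_1)=0$ and $(v_0,f_1)=d$. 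Assuming $x$ splitting, Lemma \ref{lem: split}(4) supplies $y\in {\DL}$ of order $d$ with $(x,y)=1/d$. Since ${\LL}'=U_2\oplus {\LL}''$ again has $d_1=1$, I apply Proposition \ref{prop: surjective} to ${\LL}'$ to obtain a genuinely primitive $w_1'\in {\LL}'$ of divisor $d$ (in ${\LL}'$) with $[w_1'/d]=y$; writing $w'=w_1'/d\in ({\LL}')^{\vee}$, I record $(w,w')=1/d+n$ for some $n\in {\Z}$.

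The candidate complement is $\tilde w=w_1'-d\,n\,f_1\in {\LL}$. A short computation from $(w,w')=1/d+n$, $(e_1,f_1)=1$ and $w\perp U_1$ then gives $(v_0,\tilde w)=d$ exactly. It remains to see that $\tilde w$ is primitive with ${\dv}(\tilde w)=d$: primitivity holds because the ${\LL}'$-component of $\tilde w$ is the primitive vector $w_1'$, and pairing $\tilde w$ against $U_1$ and against ${\LL}'$ shows $(\tilde w,{\LL})=dn{\Z}+d{\Z}=d{\Z}$. Hence $v_0$, and so $v$, is splitting. The main obstacle is precisely this coordination at the last step: the naive complement $df_1+w_1'$ pairs with $v_0$ only to $d$ modulo $d^2$, so one must add the correction $-d\,n\,f_1$ to hit $d$ on the nose; and that correction is harmless to primitivity and to the divisor only because $w_1'$ was chosen genuinely primitive in ${\LL}'$ (via surjectivity for ${\LL}'$) and $U_1$ is unimodular.
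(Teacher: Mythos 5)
Your proof is correct and follows essentially the same route as the paper's: both reduce to Lemma \ref{lem: split}(4), lift the resulting element $y$ to $({\LL}')^{\vee}$, and correct that lift by a multiple of $d$ times a vector of $U_1$ so that its pairing with a normalized representative of $v$ equals $d$ exactly, then check primitivity and divisor. The only differences are minor: you normalize $v$ to $de_1+dw$ by invoking the already-proved Theorem \ref{thm: main}, where the paper uses Lemma \ref{lem: save U} and a transvection to reach $v+df$; and you secure primitivity of the complementary vector by applying Proposition \ref{prop: surjective} to ${\LL}'$, where the paper deduces it afterwards from the equality $(v+df,w)=d$.
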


\begin{proof}
If $v$ is splitting, then $[v^{\ast}]$ is clearly splitting. 
We prove the converse. 
So suppose that $[v^{\ast}]$ is splitting. 
We denote $d={\dv}(v)={\rm ord}([v^{\ast}])$. 
We write ${\LL}=U\oplus {\LL}'$ by the assumption $d_1=1$. 
By Lemma \ref{lem: save U}, we may assume that $v\in {\LL}'$. 
Let $e, f$ be a basis of $U$ with $(e, f)=1$. 
By Step \ref{step+1} in the proof of Proposition \ref{prop: injective}, 
the vector $v$ is ${\GL}$-equivalent to $v+df$. 
Hence it is sufficient to prove that the vector $v+df$ is splitting. 

By the condition (4) in Lemma \ref{lem: split}, 
we can find an element $y$ of ${\DL}$ of order $d$ with $([v^{\ast}], y) = 1/d \in {\Q}/{\Z}$. 
Since ${\DL}=D_{{\LL}'}$, 
we can find a vector $w'$ of $({\LL}')^{\vee}$ such that $[w']=y$. 
Then $dw'\in {\LL}'$ because $y$ has order $d$. 
We modify $dw'$ as follows. 
Since $(v^{\ast}, w')\equiv 1/d$ mod ${\Z}$, 
we have $(v, dw')\equiv d$ mod $d^2{\Z}$. 
We write 
\begin{equation*}
(v, dw')=d+\alpha d^2, \quad \alpha\in {\Z}, 
\end{equation*}
and put 
\begin{equation*}
w = dw' + \alpha d e \; \; \in {\LL}. 
\end{equation*}

\begin{claim}\label{claim: split}
$w$ is a primitive vector of ${\LL}$ with ${\dv}(w)=d$ and $(v+df, w)=d$. 
\end{claim}

\begin{proof}
We first calculate 
\begin{equation*}
(v+df, w) = (v+df, \: dw' \! +\! \alpha de) = d + \alpha d^2 + \alpha d^2 (f, e) = d. 
\end{equation*}
Since $v+df$ is primitive with ${\dv}(v+df)=d$, 
the equality $(v+df, w)=d$ assures that $w$ is primitive as well and that 
${\dv}(w) | d$. 
On the other hand, we have 
\begin{equation*}
(w, {\LL}) = d(w'+\alpha e, {\LL}) \subset d{\Z} 
\end{equation*}
because $w'+\alpha e\in {\Lv}$. 
Hence $d| {\dv}(w)$. 
This concludes $d={\dv}(w)$.  
\end{proof}

Claim \ref{claim: split} implies that $v+df$ is splitting. 
This finishes the proof of Theorem \ref{thm: splitting}. 
\end{proof}

\subsection{Classification of splitting elements}\label{ssec: classify split}

Splitting elements of ${\DL}$ can be classified as follows. 
Let ${\DL}=\bigoplus_{p}{\DLp}$ be the decomposition into $p$-parts. 
Accordingly, we express an element of ${\DL}$ as $x=(x_{p})_{p}$ where $x_p\in {\DLp}$. 
By the condition (1) in Lemma \ref{lem: split}, we see that 
$x\in {\DL}$ is splitting if and only if $x_p\in {\DLp}$ is splitting for every prime $p$. 
Thus we shall focus on each $p$-component ${\DLp}$. 

We denote by $\mathcal{U}(p^{e})\simeq ({\Z}/p^{e})^{\oplus 2}$ 
the discriminant form of $U(p^{e})$. 
We have an orthogonal decomposition of the form  
\begin{equation*}
D_{\Lambda,p} \: = \: \bigoplus_{i=1}^{g} \mathcal{U}(p^{e_{p,i}}), \qquad 
0\leq e_{p,1} \leq e_{p,2} \leq \cdots \leq e_{p,g}. 
\end{equation*}
Accordingly, we express an element of ${\DLp}$ as 
$x_p=\sum_{i}x_{p,i}$ where $x_{p,i}\in \mathcal{U}(p^{e_{p,i}})$. 
We write 
${\rm ord}(x_{p,i})=p^{f_{p,i}}$ and 
${\rm ord}(x_{p})=p^{f_{p}}$. 
Then $f_{p,i}\leq e_{p,i}$ and $f_{p}=\max_{i} \{ f_{p,i} \}$. 

\begin{proposition}\label{prop: classify split}
A nonzero element $x_p=\sum_{i}x_{p,i}$ of ${\DLp}$ is splitting if and only if 
there is an index $i$ such that $f_{p,i}=f_{p}$ and $f_{p,i}=e_{p,i}$. 
\end{proposition}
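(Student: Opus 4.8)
The plan is to reduce the claim to condition~(4) of Lemma~\ref{lem: split} and then analyze the symplectic pairing block by block. Writing $d=p^{f_p}$, condition~(4) says that $x_p$ is splitting exactly when there is an element $y\in{\DLp}$ of order $p^{f_p}$ with $(x_p,y)=1/p^{f_p}$ in ${\Q}/{\Z}$. The orthogonal decomposition ${\DLp}=\bigoplus_i\mathcal{U}(p^{e_{p,i}})$ lets me write $y=\sum_i y_{p,i}$ and
\[
(x_p,y)=\sum_i (x_{p,i},y_{p,i}),
\]
so everything reduces to understanding, for each $i$, the order that $(x_{p,i},y_{p,i})$ can attain, subject to the constraint $\mathrm{ord}(y_{p,i})\le p^{f_p}$ forced by $\mathrm{ord}(y)=p^{f_p}$. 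On $\mathcal{U}(p^{e})\cong({\Z}/p^e)^{\oplus 2}$ I use standard generators $\alpha,\beta$ with $(\alpha,\beta)=1/p^e$, so that $(a\alpha+b\beta,c\alpha+d\beta)=(ad-bc)/p^e$.

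For sufficiency, suppose some index $i_0$ satisfies $e_{p,i_0}=f_{p,i_0}=f_p$. Then $x_{p,i_0}$ is a primitive (full-order) vector of $\mathcal{U}(p^{f_p})$, so I can choose a symplectic partner $y_{p,i_0}\in\mathcal{U}(p^{f_p})$ of order $p^{f_p}$ with $(x_{p,i_0},y_{p,i_0})=1/p^{f_p}$; taking $y=y_{p,i_0}$ and zero in the other components verifies condition~(4).

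The substance is necessity, where the index condition must be forced. First, for any $i$ one has $\mathrm{ord}\big((x_{p,i},y_{p,i})\big)\le p^{f_{p,i}}\le p^{f_p}$, so only indices with $f_{p,i}=f_p$ can possibly contribute a term of full order $p^{f_p}$. The finer point, and the main obstacle, is to bring in the scale $e_{p,i}$: writing $x_{p,i}=p^{\,e_{p,i}-f_{p,i}}\tilde{x}_i$ with $\tilde{x}_i$ primitive gives $(x_{p,i},y_{p,i})=p^{\,e_{p,i}-f_{p,i}}(\tilde{x}_i,y_{p,i})$, and since $(\tilde{x}_i,y_{p,i})$ is killed by $\mathrm{ord}(y_{p,i})\le p^{f_p}$, multiplying by $p^{\,e_{p,i}-f_{p,i}}$ yields, for every $i$ with $f_{p,i}=f_p$,
\[
\mathrm{ord}\big((x_{p,i},y_{p,i})\big)\le p^{\max(0,\,2f_p-e_{p,i})},
\]
which is strictly smaller than $p^{f_p}$ as soon as $e_{p,i}>f_p$. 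Hence, if no index satisfies $e_{p,i}=f_{p,i}=f_p$, then every summand $(x_{p,i},y_{p,i})$ lies in the subgroup $\tfrac{1}{p^{f_p-1}}{\Z}/{\Z}$ (the terms with $f_{p,i}<f_p$ by the first bound, those with $f_{p,i}=f_p$ by the display), and so does their sum $(x_p,y)$; this contradicts $(x_p,y)=1/p^{f_p}$, which has order $p^{f_p}$ (note $f_p\ge 1$ since $x_p\ne 0$). I expect the delicate part to be this order bookkeeping—correctly tracking the interplay among the order $p^{f_{p,i}}$ of $x_{p,i}$, the scale $p^{e_{p,i}}$ of its ambient block, and the imposed bound $p^{f_p}$ on $\mathrm{ord}(y_{p,i})$—and once it is in place both directions follow at once.
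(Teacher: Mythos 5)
Your proof is correct and takes essentially the same route as the paper's: both reduce the statement to condition (4) of Lemma \ref{lem: split}, prove sufficiency by completing the full-order component $x_{p,i_0}$ to a symplectic pair inside its own block $\mathcal{U}(p^{f_p})$, and prove necessity by the same two-case order estimate showing that every summand $(x_{p,i},y_{p,i})$, hence the total pairing, lies in $p^{-f_p+1}{\Z}/{\Z}$. Your write-up merely makes explicit the bookkeeping that the paper compresses into a parenthetical remark.
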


\begin{proof}
If there is an index $i$ as above, 
the element $x_{p,i}\in \mathcal{U}(p^{e_{p,i}})$ has order $p^{e_{p,i}}$. 
Then we can find an element $y_{p}$ from $\mathcal{U}(p^{e_{p,i}})$  
such that 
${\rm ord}(y_{p}) = p^{e_{p,i}} = {\rm ord}(x_{p})$ 
and  
$(x_p, y_p) = (x_{p,i}, y_p) = p^{-e_{p,i}}$. 
This shows that $x_p$ is splitting. 

Conversely, suppose that 
$f_{p,i}<e_{p,i}$ for any index $i$ with $f_{p,i}=f_{p}$. 
Then, for every $1\leq i \leq g$, we have 
$(x_{p,i}, y_{p,i})\in p^{-f_{p}+1}{\Z}/{\Z}$ 
for any element $y_{p,i}$ of $\mathcal{U}(p^{e_{p,i}})$ of order $\leq p^{f_{p}}$. 
(Consider the case $f_{p,i}=f_{p}<e_{p,i}$ and the case $f_{p,i}<f_{p}$ separately.) 
It follows that 
$(x_p, y_p)\in p^{-f_{p}+1}{\Z}/{\Z}$ 
for any element $y_{p}$ of ${\DLp}$ of order $p^{f_{p}}$. 
Hence $x_{p}$ is not splitting. 
\end{proof}

\begin{corollary}
Assume $d_1=d_2=1$. 
Every primitive vector of ${\LL}$ is splitting if and only if  
$d_3, d_4/d_3, \cdots, d_g/d_{g-1}$ are square-free and coprime to each other. 
\end{corollary}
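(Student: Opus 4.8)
The plan is to reduce the statement to the explicit classification of splitting elements in Proposition \ref{prop: classify split} and then perform an elementary computation with the elementary divisors $d_i$. Under the standing hypothesis $d_1=d_2=1$, Theorem \ref{thm: main} says the map $v\mapsto [v^{\ast}]$ surjects onto ${\DL}$, while Theorem \ref{thm: splitting} says a primitive vector $v$ is splitting exactly when $[v^{\ast}]$ is. Combining these, I would first argue that \emph{every} primitive vector of ${\LL}$ is splitting if and only if every element of ${\DL}$ is splitting. Next, using condition (1) of Lemma \ref{lem: split} (as recorded at the beginning of \S\ref{ssec: classify split}), which says $x=(x_p)_p$ is splitting iff each $x_p$ is, this is equivalent to demanding that for every prime $p$, every element of ${\DLp}$ be splitting.

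The heart of the argument is to show, via Proposition \ref{prop: classify split}, that every element of ${\DLp}$ is splitting if and only if $e_{p,i}\le 1$ for all $i$. For the ``if'' direction, when all $e_{p,i}\le 1$ a nonzero $x_p$ has $f_p=1$ (since $f_{p,i}\le e_{p,i}\le 1$), and any index $i$ with $f_{p,i}=f_p=1$ then satisfies $e_{p,i}=1=f_{p,i}$, so the criterion of Proposition \ref{prop: classify split} is met and $x_p$ is splitting. For the ``only if'' direction, I would produce a non-splitting element whenever some $e_{p,i_0}\ge 2$: take $x_p$ to be an element of order $p$ in the single summand $\mathcal{U}(p^{e_{p,i_0}})$ and zero in all other summands. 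Then $f_p=1$ is achieved only at the index $i_0$, where $f_{p,i_0}=1<2\le e_{p,i_0}$, so no index has $f_{p,i}=f_p=e_{p,i}$ and $x_p$ is not splitting.

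Finally, I would translate ``$e_{p,i}\le 1$ for all primes $p$ and all $i$'' into the stated arithmetic condition. Since the $e_{p,i}$ are nondecreasing in $i$, this amounts to $e_{p,g}\le 1$ for all $p$, i.e.\ $d_g=\prod_p p^{e_{p,g}}$ being square-free; and because the telescoping product $d_3\cdot (d_4/d_3)\cdots (d_g/d_{g-1})$ equals $d_g$, these integers are square-free and pairwise coprime precisely when their product $d_g$ is square-free. This yields the equivalence with the condition that $d_3, d_4/d_3, \ldots, d_g/d_{g-1}$ be square-free and coprime to each other. The only genuinely substantive point is the explicit construction of the non-splitting element in the ``only if'' direction; the two reductions and the final number-theoretic translation are routine.
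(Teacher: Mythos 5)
Your proposal is correct and takes essentially the same route as the paper: reduce via Theorem \ref{thm: main} and Theorem \ref{thm: splitting} to the question of when every element of ${\DL}$ is splitting, apply Proposition \ref{prop: classify split} to see this happens iff $e_{p,g}\leq 1$ for all $p$, and translate into the divisors $d_i$. Your packaging of the final step (the telescoping product $d_3\cdot(d_4/d_3)\cdots(d_g/d_{g-1})=d_g$ is square-free iff the factors are square-free and pairwise coprime) and your explicit non-splitting element when some $e_{p,i_0}\geq 2$ merely spell out details the paper leaves implicit.
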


\begin{proof}
By Proposition \ref{prop: classify split}, 
every element of ${\DL}$ is splitting 
if and only if $e_{p,g}\leq 1$ for all prime $p$. 
In this case, there is at most one index $i(p)$ for each $p$ such that 
$e_{p, i(p)}=0$ and $e_{p, i(p)+1}=1$; 
we have $e_{p,i}=e_{p,i+1}$ for other indices $i$. 
Since $d_i=\prod_{p}p^{e_{p,i}}$, 
we have $d_{i+1}/d_{i} = \prod_{p} p^{e_{p,i+1}-e_{p,i}}$. 
Therefore the above condition is equivalent to the property that 
$d_3/d_2, d_4/d_3, \cdots, d_{g}/d_{g-1}$ 
are square-free and coprime to each other. 
\end{proof}

\begin{remark}
The square-free and coprime conditions are also considered by Schellhammer (\cite{Sch} Definition 2.2). 
\end{remark}

\begin{remark}
Every primitive vector is splitting also for $U\oplus U(p) \oplus \cdots \oplus U(p)$. 
In this case, when ${\dv}(v)=1$, $v$ is obviously splitting. 
When ${\dv}(v)=p$, we may assume $v\in U(p) \oplus \cdots \oplus U(p)$ 
by Claim \ref{claim: (1,p,p)} (2). 
Then $v$ is splitting in $U(p) \oplus \cdots \oplus U(p)$. 
\end{remark}


\end{document}